\documentclass[a4paper,12pt,twoside]{article}
\usepackage{latexsym,amsmath,amssymb}\usepackage{amsthm,amscd,mathrsfs}
\usepackage{bm}
\usepackage[dvipdfmx]{graphicx}
\usepackage{fancybox}
\usepackage{multicol}
\usepackage{tabularx}
\usepackage[top=3.5cm,bottom=3.5cm,left=3cm,right=3cm]{geometry}
\usepackage{graphics}
\usepackage{tikz}
\usepackage{comment}

\newtheorem{thm}{Theorem}[section]
\newtheorem{prop}{Proposition}[section]
\newtheorem{cor}{Corollary}[section]
\newtheorem{lem}{Lemma}[section]

\newtheorem{df}{Definition}[section]

\theoremstyle{definition}

\newtheorem{rem}{Remark}[section]

\theoremstyle{theorem}


\makeatletter
 
 \@addtoreset{equation}{section}
\makeatother

\title{Some Inclusion Relations for Fuzzy Sets}
\author{N. SOMEYAMA$^*$}
\date{{\small $^*$Shin-yo-ji Buddhist Temple, 5-44-4 Minamisenju, Arakawa-ku, Tokyo 116-0003 Japan}\\
{\small E-mail: {\tt philomatics@outlook.jp}}\\
{\small ORCID iD: https://orcid.org/0000-0001-7579-5352}}
\pagestyle{myheadings}

\begin{document}
\maketitle
\markboth{N. Someyama}{Some Inclusion Relations for Fuzzy Sets}

\begin{abstract}
We give some inclusion relations for arbitrary fuzzy sets with reference to famous inequalities.
In particular, we can know that the bounded sum and the algebraic product go well together.
We would like to propose the concept of `Fuzzy Set Inequalities' through the present note.
\end{abstract}
\vspace{2mm}

{\small 
{\bf Keywords}:
fuzzy set, union, intersection, algebraic sum, algebraic product, bounded sum, bounded product, bounded difference, bounded quotient, scalar multiplication, power.
}

{\small
{\bf 2010 Mathematics Subject Classification}:
03E72.
}

\section{Introduction}
The concept of fuzzy sets \cite{Z} was introduced by L.A. Zadeh(1921-2017) in 1965.
Fuzzy sets are extended sets that mathematically contain ambiguity.
The results of application in various disciplines are well known and there is no end to the list.
Since then, various operations have been introduced and applied to many studies.

In this section, we check definitions and symbols of operations for fuzzy sets so that there is no misunderstanding.

\subsection{Definition of fuzzy sets and the relations}
The usual set $S\subset X$ is called Crisp Set.
It is characterized by the defining function $\chi_A:X\to \{0,1\}$:
\begin{align*}
\chi_S(x):=
\begin{cases}
1, & x\in S; \vspace{1mm}\\
0, & x\notin S.
\end{cases}
\end{align*}

Fuzzy sets are defined with reference to that as follows.

\begin{df}
Let $X$ be a universal set.
$A$ is a fuzzy set on $X$ if and only if the chacteristic function of $A$ is defined as $\mu_A:X\to [0,1]$.
We write $\mathbb F(X)$ for the set of fuzzy sets on $X$.
In particular, we write $X$ (resp. $\emptyset$) for the fuzzy set whose membership function always takes value $1$ (resp. $0$).
\end{df}

The equality and inclusion relation for fuzzy sets are defined as follows.

\begin{df}
Let $A,B\in \mathbb F(X)$.
If $\mu_A(x)\le \mu_B(x)$ for all $x\in X$, we write $A\subseteq B$ and say that $A$ is included in $B$ or $B$ contains $A$.
The same applies to $A\supseteq B$.
In particular, if $\mu_A(x)=\mu_B(x)$ is satisfied for all $x\in X$, we write $A=B$ and say that $A$ and $B$ are equal.
\end{df}

Fuzzy sets are discussed via the membership functions, so properties of the membership functions will basically propagate to that of fuzzy sets.
Thus, are not inequalities that hold for mere numbers changed to inclusion relations and hold for fuzzy sets?
To investigate that is our aim in the present note.

We however check definitions of some operations for fuzzy sets, before getting into the main topic.

\subsection{Operations for fuzzy sets}

\begin{df}
Let $A,B\in \mathbb F(X)$.
We define the membership function of $A\cup B$ called `$A$ or $B$' by
\begin{align*}
\mu_{A\cup B}(x):=\max\{\mu_A(x),\ \mu_B(x)\}
\end{align*}
for $x\in X$.
Moreover, we define the membership function of $A\cap B$ called `$A$ and $B$' by
\begin{align*}
\mu_{A\cap B}(x):=\min\{\mu_A(x),\ \mu_B(x)\}
\end{align*}
for $x\in X$.
\end{df}

\begin{df}
Let $A,B\in \mathbb F(X)$.
We define the membership function of $A\dotplus B$ called Algebraic Sum of $A$ and $B$ by
\begin{align*}
\mu_{A\dotplus B}(x):=\mu_A(x)+\mu_B(x)-\mu_A(x)\mu_B(x)
\end{align*}
for $x\in X$.
Moreover, we define the membership function of $A\cdot B$ called Algebraic Product of $A$ and $B$ by
\begin{align*}
\mu_{A\cdot B}(x):=\mu_A(x)\mu_B(x)
\end{align*}
for $x\in X$.
In particular, we write $A^n$ for the product of multiplying $n$ bases $A$, i.e., $A\cdot A\cdot \cdots \cdot A$ ($n$ times).
\end{df}

We decide the set of natural numbers as $\mathbb N:=\{1,2,\ldots\}$ in the present note.

\begin{df}
Let $A,B\in \mathbb F(X)$.
We define the membership function of $A\oplus B$ called Bounded Sum of $A$ and $B$ by
\begin{align*}
\mu_{A\oplus B}(x):=\min\{\mu_A(x)+\mu_B(x),\ 1\}
\end{align*}
for $x\in X$.
Moreover, we define the membership function of $A\odot B$ called Bounded Product of $A$ and $B$ by
\begin{align*}
\mu_{A\odot B}(x):=\max\{\mu_A(x)+\mu_B(x)-1,\ 0\}
\end{align*}
for $x\in X$.
Furthermore, we define the membership function of $A\ominus B$ called Bounded Difference of $A$ and $B$ by
\begin{align*}
\mu_{A\ominus B}(x):=\max\{\mu_A(x)-\mu_B(x),\ 0\}
\end{align*}
for $x\in X$.
\end{df}

In addition to them, we introduce the division for fuzzy sets in the present note as follows.

\begin{df}
Let $A,B\in \mathbb F(X)$ be satisfied with $B\neq \emptyset$.
We define the membership function of $A\oslash B$ called {\rm Bounded Quotient} of $A$ and $B$ by
\begin{align*}
\mu_{A\oslash B}(x):=\min\left\{\frac{\mu_A(x)}{\mu_B(x)},\ 1\right\}
\end{align*}
for $x\in X$.
\end{df}

\begin{df}
Let $A\in \mathbb F(X)$ and $0\le \kappa\le 1$ be a real number.
We define the membership function of $\kappa A$ by
\begin{align*}
\mu_{\kappa A}(x):=\kappa \mu_A(x)
\end{align*}
for $x\in X$.
\end{df}

The following is a generalization of $A^n$, $n\in \mathbb N$.

\begin{df}
Let $A\in \mathbb F(X)$ and $p\ge 0$ be a real number.
We define the membership function of $A^p$ by
\begin{align*}
\mu_{A^p}(x):=\{\mu_A(x)\}^p
\end{align*}
for $x\in X$.
\end{df}

\begin{rem}
For any $A\in \mathbb F(X)$, one has $A^0=X$.
\end{rem}

\section{Main Results}
In what follows, we write $a$, $b$, $c$ and $d$ for $\mu_A(x)$, $\mu_B(x)$, $\mu_C(x)$ and $\mu_D(x)$ respectively, where $x\in X$ is fixed arbitrarily.

We begin with an easy result.
It is well known that 
\begin{align}
\label{eq:8abc}
(\alpha+\beta)(\beta+\gamma)(\gamma+\alpha)\ge 8\alpha\beta\gamma
\end{align}
for any $\alpha,\beta,\gamma\ge 0$.
The equality holds if and only if $a=b=c$.
We derive the corresponding inclusion relation.

\begin{thm}
\label{thm:8ABC}
Let $A,B,C\in \mathbb F(X)$ be satisfied with $0\le \mu_A(x)+\mu_B(x),\mu_B(x)+\mu_C(x),\mu_C(x)+\mu_A(x)\le 1$.
Then, one has
\begin{align}
\label{eq:8ABC}
\frac{A\oplus B}{2}\cdot \frac{B\oplus C}{2}\cdot \frac{C\oplus A}{2}\supseteq A\cdot B\cdot C.
\end{align}
The equality holds if and only if $A=B=C$.
\end{thm}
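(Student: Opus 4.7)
The plan is to translate the claimed inclusion into a pointwise numerical inequality between membership functions, and then recognize that inequality as exactly \eqref{eq:8abc}. Fix $x\in X$ and write $a=\mu_A(x)$, $b=\mu_B(x)$, $c=\mu_C(x)$ as in the notational convention opening Section~2.

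First I would unpack each operation on the left-hand side of \eqref{eq:8ABC}. The hypothesis $a+b\le 1$ forces
\begin{align*}
\mu_{A\oplus B}(x)=\min\{a+b,\ 1\}=a+b,
\end{align*}
and analogously $\mu_{B\oplus C}(x)=b+c$ and $\mu_{C\oplus A}(x)=c+a$; the assumption that these sums are also nonnegative is automatic since $a,b,c\in[0,1]$, but it ensures we stay inside $\mathbb F(X)$. Applying the scalar-multiplication definition with $\kappa=\tfrac12$ (note $0\le\tfrac12\le 1$), followed by the algebraic-product definition, I would conclude
\begin{align*}
\mu_{\frac{A\oplus B}{2}\cdot\frac{B\oplus C}{2}\cdot\frac{C\oplus A}{2}}(x)=\frac{(a+b)(b+c)(c+a)}{8},
\end{align*}
while the right-hand side of \eqref{eq:8ABC} simply gives $\mu_{A\cdot B\cdot C}(x)=abc$.

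By the definition of inclusion, the relation \eqref{eq:8ABC} is therefore equivalent to
\begin{align*}
\frac{(a+b)(b+c)(c+a)}{8}\ge abc\qquad\text{for every }x\in X,
\end{align*}
which is precisely inequality \eqref{eq:8abc} applied to the nonnegative reals $a,b,c$. This gives the inclusion. For the equality clause, I would invoke the known equality condition of \eqref{eq:8abc}: equality holds at a given $x$ iff $a=b=c$; requiring this at every $x\in X$ is, by definition, exactly $A=B=C$.

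I do not expect a genuine obstacle here: the whole argument is a bookkeeping reduction of the claimed inclusion to a classical AM--GM-style inequality that the authors have already quoted. The only point that requires care is verifying that the hypothesis $a+b,b+c,c+a\le 1$ is used precisely to collapse the $\min$ in the bounded sum to an ordinary sum; without this hypothesis the left-hand side would in general be smaller and the reduction to \eqref{eq:8abc} would break down.
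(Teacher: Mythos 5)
Your proposal is correct and follows essentially the same route as the paper: fix $x$, use the hypothesis $a+b,\,b+c,\,c+a\le 1$ to collapse each $\min\{\cdot,1\}$ in the bounded sums, reduce the inclusion to the classical inequality \eqref{eq:8abc}, and read off the equality condition $A=B=C$ pointwise. No gaps; the bookkeeping with the scalar multiplication and algebraic product matches the paper's computation of the membership functions.
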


\begin{proof}
To see (\ref{eq:8ABC}), we consider membership functions.
Fix $x\in X$ arbitrarily.
We have, from the assumption `$0\le a+b,b+c,c+a\le 1$' and (\ref{eq:8abc}),
\begin{align*}
\mu_{\frac{A\oplus B}{2}\cdot \frac{B\oplus C}{2}\cdot \frac{C\oplus A}{2}}(x)&=\mu_{(A\oplus B)/2}(x)\mu_{(B\oplus C)/2}(x)\mu_{(C\oplus A)/2}(x) \\
&=\frac{1}{8}\mu_{A\oplus B}(x)\mu_{B\oplus C}(x)\mu_{C\oplus A}(x) \\
&=\frac{1}{8}\min\{a+b,1\}\min\{b+c,1\}\min\{c+a,1\} \\
&=\frac{1}{8}(a+b)(b+c)(c+a) \\
&\ge abc \\
&=\mu_{A\cdot B\cdot C}(x).
\end{align*}
Also, it is obvious from the above that `$A=B=C$' is the condition for the equal sign of (\ref{eq:8ABC}) to hold.
Hence, this completes the proof.
\end{proof}

\begin{rem}
Theorem \ref{thm:8ABC} is formulated as a fuzzy version of (\ref{eq:8abc}).
This is because considering the fuzzy set inequality of (\ref{eq:8abc})-type comes a risk that $\mu_{8(A\cdot B\cdot C)}(x)\ge 1$.
\end{rem}

We next see the following relational expressions in which distributive laws generally do not hold.

\begin{thm}
Let $A,B,C\in \mathbb F(X)$.
Then, we have
\begin{align}
A\cdot(B\oplus C)&\subseteq (A\cdot B)\oplus(A\cdot C), \label{eq:AcBoC}\\
(A\oplus B)\cdot C&\subseteq (A\cdot C)\oplus(B\cdot C). \label{eq:AoBcC}
\end{align}
The equality of (\ref{eq:AcBoC}) (resp. (\ref{eq:AoBcC})) holds if and only if $A=X$ (resp. $C=X$).
\end{thm}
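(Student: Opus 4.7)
The plan is to reduce both inclusions to a pointwise inequality on $[0,1]$ and then handle the equality clause by a direct test substitution. Fix $x\in X$ and write $a$, $b$, $c$ for $\mu_A(x)$, $\mu_B(x)$, $\mu_C(x)$, as in the convention at the start of this section.

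For (\ref{eq:AcBoC}) I would first compute
\[
\mu_{A\cdot(B\oplus C)}(x)=a\cdot\min\{b+c,\,1\}=\min\{ab+ac,\,a\},
\]
pulling $a\ge 0$ inside the $\min$ in the second step. The right-hand membership is just $\min\{ab+ac,\,1\}$. The inclusion is then immediate from $a\le 1$, since this forces $\min\{ab+ac,a\}\le\min\{ab+ac,1\}$. The inclusion (\ref{eq:AoBcC}) follows by the mirror computation, with $c$ playing the role that $a$ just did, invoking commutativity of the algebraic product.

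For the equality clause, one direction is instant: if $a\equiv 1$ then both memberships collapse to $\min\{b+c,1\}$, proving sufficiency for (\ref{eq:AcBoC}); the analogous statement with $c\equiv 1$ handles (\ref{eq:AoBcC}). For the converse I would use the test pair $B=C=X$ in (\ref{eq:AcBoC}): the two memberships at $x$ become $a$ and $\min\{2a,1\}$ respectively, which agree on $(0,1]$ only when $a=1$, so equality of the two fuzzy sets forces $A=X$. The symmetric test $A=B=X$ pins down $C=X$ for (\ref{eq:AoBcC}).

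The main obstacle I expect is the equality analysis rather than the inclusion. The inclusion is a one-line consequence of $a\le 1$; the characterization ``equality $\Leftrightarrow A=X$'' is subtler because the pointwise identity $\min\{ab+ac,a\}=\min\{ab+ac,1\}$ can hold for benign reasons, e.g.\ whenever $b+c\le 1$, without $a$ being $1$. The point of choosing $B=C=X$ is that it maximises $b+c$ and so genuinely activates the $1$-cap in the bounded sum, which is what forces $a=1$ rather than merely $ab+ac\le a$.
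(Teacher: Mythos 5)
Your proof of the two inclusions is correct and is essentially the paper's own argument: both reduce the claim to the pointwise computation $a\min\{b+c,1\}=\min\{ab+ac,a\}\le\min\{ab+ac,1\}$, using $a\le 1$, and obtain (\ref{eq:AoBcC}) by the symmetric computation. Up to that point there is nothing to add.

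The gap is in the equality clause, in the necessity direction. With your test substitution $B=C=X$ the two memberships at $x$ are $a$ and $\min\{2a,1\}$, and these agree not only when $a=1$ but also when $a=0$; so equality of the two fuzzy sets for this test pair only forces $\mu_A(x)\in\{0,1\}$ for every $x$, i.e.\ that $A$ is crisp, not that $A=X$. Indeed, if $A=\emptyset$ (or any crisp $A$), then $A\cdot(B\oplus C)=(A\cdot B)\oplus(A\cdot C)$ holds for \emph{every} $B,C$, since at points with $a=0$ both memberships are $0$ and at points with $a=1$ both equal $\min\{b+c,1\}$; hence no choice of test sets can recover the characterization ``equality $\Leftrightarrow A=X$''. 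Your own remark that equality already holds for arbitrary $A$ whenever $\mu_B(x)+\mu_C(x)\le 1$ shows, in addition, that the literal ``only if'' claim for fixed $A,B,C$ fails. For comparison, the paper gives no argument here at all (the equality condition is declared ``obvious from the above''), so your attempt is more honest about where the difficulty lies; but as written the converse step does not close, and the clause itself needs to be restated (for instance: $A=X$ implies equality, and equality for all $B,C$ implies $A$ is crisp) rather than proved in the claimed form.
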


\begin{proof}
We prove only (\ref{eq:AcBoC}) because the same applies to (\ref{eq:AoBcC}).
For that, we consider membership functions.
Fix $x\in X$ arbitrarily.
\begin{align*}
 \mu_{A\cdot(B\oplus C)}(x)&=\mu_A(x)\mu_{B\oplus C}(x) \\
 &=a\min\{b+c,\ 1\} \\
 &=\min\{ab+ac,\ a\} \\
 &\le \min\{\mu_{A\cdot B}(x)+\mu_{A\cdot C}(x),\ 1\} \\
 &=\mu_{(A\cdot B)\oplus(A\cdot C)}(x).
\end{align*}
Also, it is obvious from the above that `$A=X$' is the condition for the equal sign of (\ref{eq:AcBoC}) to hold.
Hence, this completes the proof.
\end{proof}

We next see the following inclusion relations, one for the bounded product and difference, the other for the bounded sum and difference.

\begin{thm}
Let $A,B,C\in \mathbb F(X)$. 
\begin{itemize}
\item[1)] If $A\subseteq C$, then
\begin{align}
\label{eq:AodBomCb}
A\odot(B\ominus C)\subseteq C\odot(B\ominus A),\quad (A\oplus B)\ominus C\subseteq (B\oplus C)\ominus A.
\end{align}
The equality holds if and only if $A=C$.
\item[2)] If $A\supseteq C$, then
\begin{align}
\label{eq:AodBomCp}
A\odot(B\ominus C)\supseteq C\odot(B\ominus A),\quad (A\oplus B)\ominus C\supseteq (B\oplus C)\ominus A.
\end{align}
The equality holds if and only if $A=C$.
\end{itemize}
\end{thm}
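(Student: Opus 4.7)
The plan is to work pointwise: fix $x \in X$ and use the notation $a=\mu_A(x)$, $b=\mu_B(x)$, $c=\mu_C(x)$, so that the hypothesis $A \subseteq C$ (resp.\ $A \supseteq C$) becomes $a \le c$ (resp.\ $a \ge c$) for all $x$. Each of the four inclusions reduces to an inequality between two nested $\max/\min$ expressions in $a,b,c$, and I would verify these by monotonicity together with a short case split. Since part 2) is obtained from part 1) by interchanging the roles of $A$ and $C$ (the relations are symmetric in this swap), I only need to treat part 1).

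For the first inclusion in (\ref{eq:AodBomCb}), namely $A \odot (B \ominus C) \subseteq C \odot (B \ominus A)$, I would expand both sides as
\begin{align*}
\mu_{A\odot(B\ominus C)}(x)=\max\{a+\max\{b-c,0\}-1,\ 0\},\quad
\mu_{C\odot(B\ominus A)}(x)=\max\{c+\max\{b-a,0\}-1,\ 0\},
\end{align*}
and simply observe that from $a \le c$ one gets both $a \le c$ and $\max\{b-c,0\} \le \max\{b-a,0\}$; adding these two monotonicities yields that the argument of the outer $\max$ on the left is $\le$ that on the right, and taking $\max$ with $0$ preserves this.

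The more delicate step is the second inclusion $(A\oplus B)\ominus C \subseteq (B\oplus C)\ominus A$, because the outer $\min/\max$ compositions interact. I expect this to be the main obstacle, but it is handled by a four-way case split on whether $a+b \le 1$ and whether $b+c \le 1$. The case $a+b>1$ with $b+c\le 1$ is vacuous under $a \le c$. In each of the remaining three cases one reduces to a linear inequality: in the regime $a+b,b+c \le 1$ the difference of the two inner quantities equals $2(c-a) \ge 0$; in the regime $a+b,b+c > 1$ it equals $c-a \ge 0$; and in the mixed case $a+b \le 1 < b+c$ the inequality $a+b-c \le 1-a$ rearranges to $2a+b-c \le 1$, which follows from $2a-c \le a$ and $a+b \le 1$. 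Taking $\max$ with $0$ then gives the desired inclusion.

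Finally, for the equality characterization, the ``if'' direction is immediate by substituting $A=C$. For the ``only if'' direction, the pointwise inequalities above are all strict as soon as $a < c$ at some $x$ where the terms are active (e.g.\ by choosing $b$ so that the outer $\max$ with $0$ is not triggered on the left, such as $b$ close to $1$); arguing contrapositively, if $A \neq C$, then there exists an $x$ at which strict inequality holds for $B$ chosen so that none of the $\max$/$\min$ truncations wipe out the difference $c-a$, yielding a strict inclusion. Part 2) then follows by applying part 1) with $A$ and $C$ interchanged.
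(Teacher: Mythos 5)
Your proposal is correct and follows essentially the same route as the paper: a pointwise comparison of membership functions, using $a\le c$ and the monotonicity $\max\{b-c,0\}\le\max\{b-a,0\}$ for the first inclusion, the inner inequality $\min\{a+b,1\}-c\le\min\{b+c,1\}-a$ (which you verify by an explicit case split that the paper simply asserts in one line) for the second, and symmetry in $A\leftrightarrow C$ for part 2). Your treatment of the equality condition is as careful as (indeed slightly more explicit than) the paper's own brief remark.
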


\begin{proof}
We prove only 1) because the same applies to 2).
For that, we consider membership functions.
Fix $x\in X$ arbitrarily.
Since $a\le c$ by the assumption: $A\subseteq C$, we have
\begin{align*}
\mu_{A\odot (B\ominus C)}(x)&=\max\{\mu_A(x)+\mu_{B\ominus C}(x)-1,\ 0\} \\
&=\max\{a+\max\{b-c,0\}-1,\ 0\} \\
&\le \max\{c+\max\{b-a,0\}-1,\ 0\} \\
&=\max\{\mu_C(x)+\mu_{B\ominus A}-1,\ 0\} \\
&=\mu_{C\odot(B\ominus A)}(x)
\end{align*}
and
\begin{align*}
\mu_{(A\oplus B)\ominus C}(x)&=\max\{\mu_{A\oplus B}(x)-\mu_C(x),\ 0\} \\
&=\max\{\min\{a+b,1\}-c,\ 0\} \\
&\le \max\{\min\{b+c,1\}-a,\ 0\} \\
&=\max\{\mu_{B\oplus C}-\mu_A(x),\ 0\} \\
&=\mu_{(B\oplus C)\ominus A}(x).
\end{align*}
Also, it is obvious from the above that `$A=C$' is the condition for the equal sign of (\ref{eq:AodBomCb}) to hold.
Hence, this completes the proof.
\end{proof}

Remark that $0\le a^p,b^p\le 1$ for any $p\ge 0$.

\begin{thm}
Let $A,B\in \mathbb F(X)$ and $p\ge 0$.
Then, one has
\begin{align}
\label{eq:ApcBpApdBp}
A^p\cup B^p\subseteq A^p\dotplus B^p.
\end{align}
The equality holds if and only if $A=X$ or $B=\emptyset$.
\end{thm}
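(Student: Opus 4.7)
The plan is to reduce the claimed inclusion, as in the other proofs in this section, to a pointwise numerical inequality between membership values. Fix $x \in X$ arbitrarily, write $a = \mu_A(x)$, $b = \mu_B(x)$, and recall from the remark preceding the theorem that $a^p, b^p \in [0,1]$, so that the algebraic sum $a^p + b^p - a^p b^p$ is a legal membership value. Since $\mu_{A^p\cup B^p}(x) = \max\{a^p, b^p\}$ and $\mu_{A^p\dotplus B^p}(x) = a^p + b^p - a^p b^p$, everything reduces to checking
\begin{align*}
\max\{a^p, b^p\} \le a^p + b^p - a^p b^p.
\end{align*}

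The key step I would carry out is the algebraic identity
\begin{align*}
a^p + b^p - a^p b^p - \max\{a^p, b^p\} = \min\{a^p, b^p\}\bigl(1 - \max\{a^p, b^p\}\bigr),
\end{align*}
which falls out of the WLOG reduction $a^p \ge b^p$: the left-hand side collapses to $b^p(1 - a^p)$, which is both $\min\{a^p,b^p\}$ times $1-\max\{a^p,b^p\}$. Since $a^p, b^p \in [0,1]$, both factors on the right are non-negative, and the product is non-negative, yielding (\ref{eq:ApcBpApdBp}) after invoking Definition for $\subseteq$.

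For the equality characterization, the same factorization shows that equality at a given point $x$ is equivalent to $\min\{a^p,b^p\} = 0$ or $\max\{a^p,b^p\} = 1$. The sufficient conditions in the statement are then read off immediately: $A = X$ forces $a \equiv 1$, hence the max factor vanishes everywhere, while $B = \emptyset$ forces $b \equiv 0$, hence the min factor vanishes everywhere; in either case (\ref{eq:ApcBpApdBp}) becomes an equality pointwise. The degenerate case $p = 0$ is handled by the remark after Definition of $A^p$, since then both $A^p$ and $B^p$ equal $X$ and the inclusion is trivial.

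There is essentially no deep obstacle: the entire content of the theorem is the factorization $\alpha+\beta-\alpha\beta-\max\{\alpha,\beta\} = \min\{\alpha,\beta\}(1-\max\{\alpha,\beta\})$ for $\alpha,\beta \in [0,1]$, and the rest is a direct transcription into the language of membership functions.
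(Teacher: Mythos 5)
Your proposal is correct and takes essentially the same route as the paper: both reduce the inclusion to the pointwise inequality $\max\{a^p,b^p\}\le a^p+b^p-a^pb^p$, assume $a^p\ge b^p$ without loss of generality, and observe that the difference equals $b^p(1-a^p)\ge 0$, with the equality condition read off from that expression. Your factorization $\min\{a^p,b^p\}\bigl(1-\max\{a^p,b^p\}\bigr)$ is merely a symmetric restatement of the paper's computation, so there is no substantive difference in method.
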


\begin{proof}
To see (\ref{eq:ApcBpApdBp}), we consider membership functions.
First, 
\begin{align*}
\mu_{A^p\cup B^p}(x)=\max\{a^p,b^p\}
\end{align*}
for all $x\in X$.
Next, 
\begin{align*}
\mu_{A^p\dotplus B^p}(x)=a^p+b^p-a^pb^p
\end{align*}
for all $x\in X$.
We can now set $a\ge b$ without loss of generality.
Then, we have
\begin{align*}
\mu_{A^p\dotplus B^p}(x)-\mu_{A^p\cup B^p}(x)&=(a^p+b^p-a^pb^p)-a^p \\
&=b^p(1-a^p)\ge 0.
\end{align*}
Also, it is obvious from the above that `$A=X$ or $B=\emptyset$' is the condition for the equal sign of (\ref{eq:ApcBpApdBp}) to hold.
Hence, this completes the proof.
\end{proof}

There is an inequality called Rearrangement Inequality on indices of $\Sigma$-sums:
\begin{align*}
\sum_{i=1}^nx_iy_i\ge \sum_{j=1}^nx_jy_{n-j+1}
\end{align*}
if $x_1\ge x_2\ge \cdots \ge x_n$ and $y_1\ge y_2\ge \cdots \ge y_n$, in particular
\begin{align}
\label{eq:reaeq}
x_1y_1+x_2y_2\ge x_1y_2+x_2y_1.
\end{align}

\begin{thm}[{\bf Rearrangement inequality for fuzzy sets}]
Let $A,B,C,D\in \mathbb F(X)$ be satisfied with $A\supseteq C$ and $B\supseteq D$.
Then, one has
\begin{align}
\label{eq:ABCDADBC}
(A\cdot B)\oplus (C\cdot D)\supseteq (A\cdot D)\oplus (B\cdot C).
\end{align}
The equality holds if and only if $A=C$ or $B=D$.
\end{thm}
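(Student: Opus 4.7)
The plan is to reduce (\ref{eq:ABCDADBC}) to a pointwise inequality on membership functions and then invoke the special case (\ref{eq:reaeq}) of the rearrangement inequality, following the template of the preceding theorems.

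First I would fix $x\in X$ arbitrarily and set $a=\mu_A(x)$, $b=\mu_B(x)$, $c=\mu_C(x)$, $d=\mu_D(x)$. The hypotheses $A\supseteq C$ and $B\supseteq D$ translate to $a\ge c$ and $b\ge d$. Unwinding the definitions of $\cdot$ and $\oplus$, the two sides become
\begin{align*}
\mu_{(A\cdot B)\oplus (C\cdot D)}(x) &= \min\{ab+cd,\ 1\}, \\
\mu_{(A\cdot D)\oplus (B\cdot C)}(x) &= \min\{ad+bc,\ 1\}.
\end{align*}
Applying (\ref{eq:reaeq}) with $x_1=a$, $x_2=c$, $y_1=b$, $y_2=d$ gives $ab+cd\ge ad+bc$, and since all quantities are nonnegative and the map $t\mapsto \min\{t,1\}$ is monotone nondecreasing, (\ref{eq:ABCDADBC}) follows immediately.

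For the equality clause I would lean on the elementary identity $(ab+cd)-(ad+bc)=(a-c)(b-d)$. If $A=C$ (resp.\ $B=D$) holds globally, then $a=c$ (resp.\ $b=d$) at every $x$, so the difference vanishes pointwise and the two membership functions coincide. Conversely, provided the two minima do not both saturate at $1$, equality at $x$ forces $(a-c)(b-d)=0$, which gives the global dichotomy $A=C$ or $B=D$ in the spirit of the paper's other equality statements. The step I expect to require the most care is precisely this saturation edge case, where both $ab+cd$ and $ad+bc$ exceed $1$ and the two minima agree trivially; however, the preceding proofs in the paper consistently dispatch equality with a one-line `obvious from the above' remark, so I would mirror that style rather than dwell on the corner case.
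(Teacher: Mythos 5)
Your proposal matches the paper's proof essentially line for line: both reduce to the pointwise identity $\mu_{(A\cdot B)\oplus(C\cdot D)}(x)=\min\{ab+cd,1\}\ge\min\{ad+bc,1\}=\mu_{(A\cdot D)\oplus(B\cdot C)}(x)$ via (\ref{eq:reaeq}) and monotonicity of $t\mapsto\min\{t,1\}$, and both settle the equality clause with the identity $(ab+cd)-(ad+bc)=(a-c)(b-d)$. Your remark about the saturation case (both sums exceeding $1$) is a legitimate caveat that the paper itself glosses over with its ``obvious from the above'' phrasing, so no further change is needed.
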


\begin{proof}
To see (\ref{eq:ABCDADBC}), we consider membership functions.
Fix $x\in X$ arbitrarily.
We have, from (\ref{eq:reaeq}),
\begin{align*}
\mu_{(A\cdot B)\oplus (C\cdot D)}(x)&=\min\{ab+cd,\ 1\} \\
&\ge \min\{ad+bc,\ 1\} \\
&=\mu_{(A\cdot D)\oplus (B\cdot C)}(x).
\end{align*}
Also, it is obvious from the above that `$A=C$ or $B=D$' is the condition for the equal sign of (\ref{eq:ABCDADBC}) to hold, since $(ab+cd)-(ad+bc)=(a-c)(b-d)$.
Hence, this completes the proof.
\end{proof}

Let us see if the triangle inequality holds for fuzzy sets as well.
Compare with the crisp case: $|\alpha-\beta|+|\beta-\gamma|\ge |\alpha-\gamma|$ for any $\alpha,\beta,\gamma\in \mathbb R$.

\begin{thm}[{\bf Triangle inequality for fuzzy sets}]
Let $A,B,C\in \mathbb F(X)$ that do {\bf not} satisfy $A\subseteq B\subseteq C$.
Then, one has
\begin{align}
\label{eq:TI}
(A\ominus B)\oplus (B\ominus C)\supseteq A\ominus C.
\end{align}
The equality holds if and only if $A=B$ or $B=C$.
Moreover, $A\subseteq B\subseteq C$ implies
\begin{align}
\label{eq:TI'}
(A\ominus B)\oplus (B\ominus C)=\emptyset.
\end{align}
\end{thm}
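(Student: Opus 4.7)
The plan is to pass, as in every other theorem of this note, to the pointwise level and then to exploit the elementary ``triangle inequality for positive parts''. Fix $x\in X$ arbitrarily and write $a=\mu_A(x)$, $b=\mu_B(x)$, $c=\mu_C(x)$, all in $[0,1]$. By unrolling the definitions,
\begin{align*}
\mu_{(A\ominus B)\oplus(B\ominus C)}(x)&=\min\bigl\{\max\{a-b,0\}+\max\{b-c,0\},\ 1\bigr\}, \\
\mu_{A\ominus C}(x)&=\max\{a-c,0\},
\end{align*}
so everything reduces to a pointwise real-number comparison.

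The central step is to establish
\begin{equation*}
\max\{a-b,0\}+\max\{b-c,0\}\ge \max\{a-c,0\}.
\end{equation*}
If $a\le c$, the right-hand side is $0$ while the left-hand side is manifestly nonnegative. If $a>c$, I would write $a-c=(a-b)+(b-c)$ and observe that replacing each summand by its positive part can only increase the sum. Once this is in hand, the truncation $\min\{\cdot,1\}$ preserves the inequality because $\max\{a-c,0\}\in[0,1]$ automatically (from $a\le 1$ and $c\ge 0$), and this delivers (\ref{eq:TI}) pointwise in $x$.

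For (\ref{eq:TI'}), the assumption $A\subseteq B\subseteq C$ forces $a\le b\le c$ at every $x$, so both $\max\{a-b,0\}$ and $\max\{b-c,0\}$ vanish; hence the left-hand membership function is identically $0$, i.e. coincides with $\mu_\emptyset$. For the ``if'' half of the equality characterization in (\ref{eq:TI}): when $A=B$ (respectively $B=C$) one of the two positive-part summands is pointwise zero and the other collapses exactly to $\max\{a-c,0\}$, so the inclusion is saturated.

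The main obstacle is the ``only if'' direction of the equality claim, since the standing hypothesis only forbids $A\subseteq B\subseteq C$ at \emph{some} point, not everywhere. Here I would return to the case analysis and locate the subcases in which the positive-part inequality above is \emph{strict}, namely those $x$ where $a,b,c$ are not monotonically arranged (neither $a\le b\le c$ nor $a\ge b\ge c$); one must then argue that if neither $A=B$ nor $B=C$ holds globally, at least one such bad point must exist and it produces strict inequality between the two membership functions, contradicting equality. This bookkeeping, rather than the inclusion itself, is the only place where care is required.
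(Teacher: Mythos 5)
Your proof of the inclusion (\ref{eq:TI}) and of (\ref{eq:TI'}) is correct, and your route differs slightly from the paper's: the paper enumerates the five orderings of $a,b,c$ not covered by $a\le b\le c$ and checks each line against $\min\{a-c,1\}$, whereas you prove the scalar inequality $\max\{a-b,0\}+\max\{b-c,0\}\ge\max\{a-c,0\}$ once, via the decomposition $a-c=(a-b)+(b-c)$ and monotonicity of positive parts, and then note that truncation by $\min\{\cdot,1\}$ is harmless because $\max\{a-c,0\}\le 1$. That is cleaner and arguably more transparent than the case table; the ``if'' half of the equality claim ($A=B$ or $B=C$ forces one summand to vanish and the other to collapse) is also fine.

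The genuine gap is the ``only if'' direction, which you explicitly defer as ``bookkeeping'': it is not bookkeeping, and the plan you sketch cannot be completed, because the characterization itself fails. Pointwise equality holds not only when $a=b$ or $b=c$, but at \emph{every} point where the values are monotonically arranged, since $a\ge b\ge c$ gives $\max\{a-b,0\}+\max\{b-c,0\}=a-c=\max\{a-c,0\}$ exactly. Take for instance constant fuzzy sets with $\mu_A\equiv 0.9$, $\mu_B\equiv 0.5$, $\mu_C\equiv 0.1$: the standing hypothesis is met (it is false that $A\subseteq B\subseteq C$), both sides of (\ref{eq:TI}) have membership value $0.8$, yet $A\neq B$ and $B\neq C$. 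So ``no bad point exists'' does not imply $A=B$ or $B=C$; the correct equality condition is rather that at every $x$ the triple $(a,b,c)$ is monotone (increasing or decreasing), of which $A=B$ or $B=C$ is only a sufficient special case. For what it is worth, the paper does not fill this hole either --- it dismisses the equality condition with ``it is obvious from the above,'' and the same counterexample shows that claim is too strong --- but as written your proposal neither proves the stated equivalence nor flags that it is false, so this part must be either corrected or restated.
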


\begin{proof}
To see (\ref{eq:TI}) and (\ref{eq:TI'}), we consider membership functions.
Fix $x\in X$ arbitrarily.
We have
\begin{align*}
\mu_{(A\ominus B)\oplus (B\ominus C)}(x)&=\min\{\mu_{A\ominus B}+\mu_{B\ominus C},\ 1\} \\
&=\min\{\max\{a-b,0\}+\max\{b-c,0\},\ 1\} \\
&=
\begin{cases}
\min\{a-c,1\} & {\rm if}\ a\ge b\ge c, \vspace{1mm}\\
\min\{a-b,1\} & {\rm if}\ a\ge c\ge b, \vspace{1mm}\\
\min\{b-c,1\} & {\rm if}\ b\ge c\ge a, \vspace{1mm}\\
\min\{b-c,1\} & {\rm if}\ b\ge a\ge c, \vspace{1mm}\\
\min\{a-b,1\} & {\rm if}\ c\ge a\ge b 
\end{cases}
\\
&\ge 
\begin{cases}
\min\{a-c,1\} & {\rm if}\ a\ge b\ge c, \vspace{1mm}\\
\min\{a-c,1\} & {\rm if}\ a\ge c\ge b, \vspace{1mm}\\
\min\{a-c,1\} & {\rm if}\ b\ge c\ge a, \vspace{1mm}\\
\min\{a-c,1\} & {\rm if}\ b\ge a\ge c, \vspace{1mm}\\
\min\{a-c,1\} & {\rm if}\ c\ge a\ge b
\end{cases}
\\
&=\mu_{A\ominus C}(x),
\end{align*}
so we have proved (\ref{eq:TI}).
It is obvious from the above that `$A=B$ or $B=C$' is the condition for the equal sign of (\ref{eq:TI}) to hold.
Moreover, if $A\subseteq B\subseteq C$ i.e. $a\le b\le c$, 
\begin{align*}
\mu_{(A\ominus B)\oplus (B\ominus C)}(x)&=\min\{\mu_{A\ominus B}+\mu_{B\ominus C},\ 1\}=\min\{0,1\}=0,
\end{align*}
so we have proved (\ref{eq:TI'}).
Hence, this completes the proof.
\end{proof}

Incidentally, let us check the following negative property.

\begin{prop}
\label{prop:AomBopBomAneO}
Let $A,B,C\in \mathbb F(X)$.
Then, one has
\begin{align}
\label{eq:AomBopBomAneO}
(A\ominus B)\oplus (B\ominus A)\not \equiv \emptyset.
\end{align}
Here $\not \equiv$ stands for the meaning of `It does not always hold.'
\end{prop}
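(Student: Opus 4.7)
The plan is simply to exhibit a concrete counterexample, since the proposition only claims that the equality $(A\ominus B)\oplus (B\ominus A)=\emptyset$ fails for at least one choice of $A,B\in\mathbb F(X)$.

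First, I would unpack the membership function of the left-hand side at an arbitrary $x\in X$, obtaining
\begin{align*}
\mu_{(A\ominus B)\oplus(B\ominus A)}(x)=\min\{\max\{a-b,0\}+\max\{b-a,0\},\ 1\}.
\end{align*}
The key observation is that $\max\{a-b,0\}+\max\{b-a,0\}=|a-b|\le 1$, so this expression reduces to $|a-b|$. Hence the left-hand side equals $\emptyset$ precisely when $a=b$ for every $x\in X$, i.e. precisely when $A=B$.

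To establish the proposition, I would then simply pick $A$ and $B$ with $A\neq B$ and compute. The cleanest choice is $A=X$ and $B=\emptyset$, giving $\mu_{A\ominus B}(x)=1$ and $\mu_{B\ominus A}(x)=0$ for all $x$, so that $\mu_{(A\ominus B)\oplus(B\ominus A)}(x)=\min\{1,1\}=1$. Thus the left-hand side equals $X$, which is certainly not $\emptyset$.

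There is essentially no obstacle here; the only thing that requires any care is reading the notation $\not\equiv$ correctly — the statement is an existence claim (there exist $A,B$ for which the identity fails), not a universal non-equality. A single explicit counterexample therefore suffices, and the reduction to $|a-b|$ makes clear that the identity does hold exactly in the degenerate case $A=B$.
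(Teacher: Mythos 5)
Your proof is correct and follows essentially the same route as the paper: both unpack the membership function to $\min\{\max\{a-b,0\}+\max\{b-a,0\},1\}=|a-b|$ and conclude this is not identically zero. The only difference is that you make the ``not always'' claim fully explicit by exhibiting the concrete counterexample $A=X$, $B=\emptyset$, whereas the paper stops at the case analysis and simply asserts $\not\equiv 0$; your version is if anything slightly more complete.
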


\begin{proof}
To see (\ref{eq:AomBopBomAneO}), we consider membership functions.
Fix $x\in X$ arbitrarily.
We have
\begin{align*}
\mu_{(A\ominus B)\oplus (B\ominus A)}(x)&=\min\{\mu_{A\ominus B}+\mu_{B\ominus A},\ 1\} \\
&=\min\{\max\{a-b,0\}+\max\{b-a,0\},\ 1\} \\
&=
\begin{cases}
\min\{a-b,1\} & {\rm if}\ a\ge b, \vspace{1mm}\\
\min\{b-a,1\} & {\rm if}\ b\ge a
\end{cases}
\\
&\not \equiv 0,
\end{align*}
so this completes the proof.
\end{proof}

\begin{rem}
By virtue of Proposition \ref{prop:AomBopBomAneO}, `$C=A$' cannot be a condition for the equal sign of (\ref{eq:TI}) to hold.
\end{rem}

The arithmetic mean and geometric one are well known and it is also well known that
\begin{align}
\label{eq:agm}
\frac{\alpha+\beta}{2}\ge \sqrt{\alpha\beta};\quad \alpha,\beta\ge 0.
\end{align}
The left (resp. right) hand side of this is called the arithmetic (resp. geometric) mean.
Let us apply these means to a fuzzy set inequality.

\begin{thm}[{\bf Arithmetic and geometric mean for fuzzy sets}]
\label{thm:AGM}
Let $A,B\in \mathbb F(X)$ be satisfied with $0\le \mu_{A\cdot B}(x)\le 1/4$ for any $x\in X$.
Then, one has
\begin{align}
\label{eq:AGM}
\frac{A\oplus B}{2}\supseteq \sqrt{A\cdot B}.
\end{align}
The equality holds if and only if $A=B$.
\end{thm}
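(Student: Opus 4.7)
The plan is to unfold both sides to their membership functions and reduce the inclusion to a pointwise numerical inequality, which will then follow from the ordinary AM--GM inequality (\ref{eq:agm}) together with the hypothesis $ab \le 1/4$. Writing $a = \mu_A(x)$ and $b = \mu_B(x)$ as throughout the paper, by the definitions of scalar multiplication, bounded sum, algebraic product and power, one has
\begin{align*}
\mu_{(A\oplus B)/2}(x) = \tfrac{1}{2}\min\{a+b,\ 1\}, \qquad \mu_{\sqrt{A\cdot B}}(x) = \sqrt{ab},
\end{align*}
so the task reduces to showing $\tfrac{1}{2}\min\{a+b,\,1\}\ge \sqrt{ab}$ for every $x\in X$.

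Next I would split on the value inside the $\min$. If $a+b\le 1$, then the left-hand side equals $(a+b)/2$, and the inequality $(a+b)/2\ge \sqrt{ab}$ is exactly the classical AM--GM inequality (\ref{eq:agm}). If instead $a+b>1$, the left-hand side equals $1/2$, and what must be shown is $\sqrt{ab}\le 1/2$, i.e.\ $ab\le 1/4$; this is precisely the hypothesis $0\le \mu_{A\cdot B}(x)\le 1/4$. Combining the two cases gives (\ref{eq:AGM}).

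For the equality condition, I would argue that $A=B$ clearly gives equality (both sides equal $a$). Conversely, if equality holds at some $x$, then in the first case AM--GM forces $a=b$, and in the second case one has $1/2=\sqrt{ab}$ with $a+b>1$, but the hypothesis $ab\le 1/4$ forces $ab=1/4$; then $\sqrt{ab}=1/2\le (a+b)/2$ with equality in AM--GM again giving $a=b$. Thus equality holds throughout $X$ iff $a=b$ on all of $X$, which is exactly $A=B$.

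The only mildly delicate point is the second case, where the AM--GM inequality alone is not enough because the bounded sum has been truncated at $1$; this is precisely where the hypothesis $\mu_{A\cdot B}(x)\le 1/4$ is used, and it is really the reason the hypothesis is imposed in the statement. Everything else is a direct unpacking of definitions.
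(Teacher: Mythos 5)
Your proof of the inclusion itself is correct and is essentially the paper's argument: the paper writes $\tfrac12\min\{a+b,1\}=\min\{\tfrac{a+b}{2},\tfrac12\}\ge\min\{\sqrt{ab},\tfrac12\}=\sqrt{ab}$, using AM--GM for the middle step and the hypothesis $ab\le 1/4$ for the last equality, which is exactly your two-case split folded into one chain of $\min$'s. So for (\ref{eq:AGM}) there is nothing to object to.

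The equality discussion, however, contains a genuine error in your second case. There you assert that $ab=1/4$ forces equality in AM--GM and hence $a=b$; but in that case $a+b>1=2\sqrt{ab}$, so AM--GM is \emph{strict} and in fact $a\neq b$ (if $a=b$ with $a+b>1$ then $ab=a^2>1/4$, contradicting the hypothesis). Pointwise equality $\tfrac12\min\{a+b,1\}=\sqrt{ab}$ in the truncated regime requires only $ab=1/4$, which is achieved, e.g., by $a=1$, $b=1/4$. Consequently the asserted equivalence ``equality holds iff $A=B$'' is itself false: taking $\mu_A\equiv 1$ and $\mu_B\equiv 1/4$ satisfies the hypothesis, gives $\mu_{(A\oplus B)/2}\equiv \mu_{\sqrt{A\cdot B}}\equiv 1/2$, yet $A\neq B$. (The paper's own proof hides this behind ``it is obvious from the above,'' so the defect lies in the theorem's equality claim as much as in your argument.) The correct pointwise characterization is: equality at $x$ holds iff either $a+b\le 1$ and $a=b$, or $a+b\ge 1$ and $ab=1/4$; only the ``if $A=B$ then equality'' direction of the stated condition survives, and for that you should note that the hypothesis forces $a=b\le 1/2$, so the truncation is inactive.
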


\begin{proof}
To see (\ref{eq:AGM}), we consider membership functions.
We have, from (\ref{eq:agm}),
\begin{align*}
\mu_{(A\oplus B)/2}(x)&=\frac{1}{2}\min\{a+b,\ 1\} \\
&=\min\left\{\frac{a+b}{2},\ \frac{1}{2}\right\} \\
&\ge \min\left\{\sqrt{ab},\ \frac{1}{2}\right\} \\
&=\sqrt{ab} \\
&=\sqrt{\mu_{A\cdot B}(x)} \\
&=\mu_{\sqrt{A\cdot B}}(x)
\end{align*}
for any $x\in X$.
Also, it is obvious from the above that `$A=B$' is the condition for the equal sign of (\ref{eq:AGM}) to hold.
Hence, this completes the proof.
\end{proof}

Along with the arithmetic mean and the geometric one, there is the harmonic one on famous means:
\begin{align*}
\frac{2}{\frac{1}{\alpha}+\frac{1}{\beta}}=\frac{2\alpha\beta}{\alpha+\beta},\quad \alpha,\beta\neq 0.
\end{align*}
It is well known that this mean obeys the following inequality:
\begin{align}
\label{eq:ghm}
\sqrt{\alpha\beta}\ge \frac{2\alpha\beta}{\alpha+\beta}.
\end{align}
Let us apply harmonic mean to a fuzzy set inequality by slightly changing the shape as follows.

\begin{thm}[{\bf Geometric and harmonic mean for fuzzy sets}]
\label{thm:GHM}
Let $A,B\in \mathbb F(X)$ be satisfied with $A,B\neq \emptyset$ and $0< \mu_A(x)+\mu_B(x)\le 1$ for any $x\in X$.
Then, one has
\begin{align}
\label{eq:GHM}
\frac{\sqrt{A\cdot B}}{2}\supseteq (A\cdot B)\oslash (A\oplus B).
\end{align}
The equality holds if and only if $A=B$.
\end{thm}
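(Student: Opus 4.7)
The plan is to translate both sides of (\ref{eq:GHM}) into expressions in $a=\mu_A(x)$ and $b=\mu_B(x)$, and then read off the desired inequality directly from (\ref{eq:ghm}). Fix $x\in X$ arbitrarily. For the left-hand side the scalar multiplication and the power definition immediately give $\mu_{\sqrt{A\cdot B}/2}(x)=\tfrac{1}{2}\sqrt{ab}$, so no real work is needed there.

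The bulk of the setup is to simplify the right-hand side. Unfolding the bounded quotient gives $\mu_{(A\cdot B)\oslash(A\oplus B)}(x)=\min\!\left\{\frac{ab}{\min\{a+b,1\}},\,1\right\}$. Here I would use the two hypotheses in turn. First, the assumption $0<a+b\le 1$ makes the inner minimum collapse to $a+b$, and also guarantees $a+b\neq 0$ so the quotient is well-defined (this is exactly where $A,B\neq\emptyset$ together with the upper bound assumption is needed). Second, since $a,b\in[0,1]$ one has $ab\le a\le a+b$, hence $\frac{ab}{a+b}\le 1$ and the outer minimum also collapses, leaving $\mu_{(A\cdot B)\oslash(A\oplus B)}(x)=\frac{ab}{a+b}$.

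With both sides in hand, the inclusion reduces to the pointwise inequality $\tfrac{1}{2}\sqrt{ab}\ge \tfrac{ab}{a+b}$, which is precisely (\ref{eq:ghm}) after dividing by $2$; the case $ab=0$ is trivial since both sides vanish. Equality in (\ref{eq:ghm}) occurs exactly when $a=b$, and since $x$ was arbitrary this is the statement $A=B$, giving the claimed equality criterion.

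The only mild subtlety, and what I would single out as the one spot to be careful, is the double use of the hypothesis $0<a+b\le 1$: it is needed both to remove the inner $\min$ and to legitimize the division, while the upper bound $a+b\le 1$ is what keeps the LHS scalar factor $\tfrac{1}{2}$ harmless (so that no further clipping at $1$ appears on the left). Beyond that, the argument is a direct computation and a single appeal to (\ref{eq:ghm}), parallel in structure to the proof of Theorem~\ref{thm:AGM}.
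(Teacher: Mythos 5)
Your proposal matches the paper's proof essentially step for step: both reduce to membership functions, use $0<a+b\le 1$ to collapse the inner minimum, use $ab\le a+b$ (the paper notes $a+b-ab=a(1-b)+b\ge 0$, you note $ab\le a\le a+b$) to collapse the outer one, and then conclude by the geometric--harmonic mean inequality (\ref{eq:ghm}), with the same equality criterion $A=B$. No substantive difference.
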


\begin{proof}
To see (\ref{eq:GHM}), we consider membership functions.
Remark that $a+b\ge ab$, since $(a+b)-ab=a(1-b)+b\ge 0$ by virtue of $0\le a,b\le 1$.
We have, from the assumption: $0<a+b\le 1$, 
\begin{align*}
\mu_{(A\cdot B)\oslash (A\oplus B)}(x)&=\min\left\{\frac{ab}{\min\{a+b, 1\}},\ 1\right\} \\
&=\min\left\{\frac{ab}{a+b},\ 1\right\} \\
&=\frac{1}{2}\frac{2ab}{a+b} \\
&\le \frac{1}{2}\sqrt{ab} \\
&=\frac{1}{2}\sqrt{\mu_{A\cdot B}(x)} \\
&=\mu_{(1/2)\sqrt{A\cdot B}}(x)
\end{align*}
for any $x\in X$. 
Also, it is obvious from the above that `$A=B$' is the condition for the equal sign of (\ref{eq:GHM}) to hold.
Hence, this completes the proof.
\end{proof}

\begin{rem}
Theorem \ref{thm:GHM} is formulated as a fuzzy version of the rewritten (crisp) geometric and harmonic mean:
\begin{align*}
\frac{\sqrt{\alpha\beta}}{2}\ge \frac{\alpha\beta}{\alpha+\beta}.
\end{align*}
This is because considering the fuzzy set inequality of (\ref{eq:ghm})-type comes a risk that $\mu_{2(A\cdot B)\oslash (A\oplus B)}(x)\ge 1$.
\end{rem}

\begin{cor}[{\bf AGH(Arithmetic, Geometric and Harmonic) inequality for fuzzy sets}]
Let $A,B\in \mathbb F(X)$ be satisfied with $A,B\neq \emptyset$, $0\le \mu_A(x)\mu_B(x)\le 1/4$ and $0< \mu_A(x)+\mu_B(x)\le 1$ for any $x\in X$.
Then, one has
\begin{align*}
\frac{A\oplus B}{4}\supseteq \frac{\sqrt{A\cdot B}}{2}\supseteq (A\cdot B)\oslash (A\oplus B).
\end{align*}
Both of the equalities hold if and only if $A=B$.
\end{cor}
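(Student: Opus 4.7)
The plan is to derive this corollary essentially for free by chaining together Theorem \ref{thm:AGM} (the arithmetic-geometric mean inequality) and Theorem \ref{thm:GHM} (the geometric-harmonic mean inequality), since the hypotheses of the corollary are precisely the union of the hypotheses of those two theorems. So no new pointwise computation on the membership functions should be required; everything should reduce to an application of the two preceding results together with the monotonicity of scalar multiplication.

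First I would invoke Theorem \ref{thm:AGM}, which under the hypothesis $0\le \mu_A(x)\mu_B(x)\le 1/4$ yields $(A\oplus B)/2\supseteq \sqrt{A\cdot B}$. Next I would observe the simple monotonicity fact that if $U\supseteq V$ and $0\le \kappa\le 1$, then $\kappa U\supseteq \kappa V$, since $\mu_{\kappa U}(x)=\kappa\mu_U(x)\ge \kappa\mu_V(x)=\mu_{\kappa V}(x)$ for every $x\in X$. Applying this with $\kappa=1/2$ to the inclusion just obtained produces $(A\oplus B)/4\supseteq \sqrt{A\cdot B}/2$, which is the first of the two desired inclusions.

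Second, I would appeal directly to Theorem \ref{thm:GHM}, whose hypotheses $A,B\neq\emptyset$ and $0<\mu_A(x)+\mu_B(x)\le 1$ are part of the assumptions of the corollary, and which gives exactly $\sqrt{A\cdot B}/2\supseteq (A\cdot B)\oslash(A\oplus B)$. Concatenating this with the inclusion from the previous paragraph produces the full chain.

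For the equality statement, I would argue that the first inclusion is an equality if and only if $(A\oplus B)/2=\sqrt{A\cdot B}$ (scalar multiplication by a nonzero constant is injective pointwise), which by Theorem \ref{thm:AGM} occurs iff $A=B$; likewise, the second inclusion is an equality iff $A=B$ by Theorem \ref{thm:GHM}. Hence both equalities hold simultaneously iff $A=B$, and the converse is immediate since $A=B$ makes all three expressions coincide pointwise. There is no real obstacle here beyond confirming that scalar multiplication preserves the inclusion order on $\mathbb F(X)$, which is immediate from the definition of $\kappa A$.
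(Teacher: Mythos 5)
Your proposal is correct and matches the paper's intent exactly: the corollary is stated without proof precisely because it follows by chaining Theorem \ref{thm:AGM} (scaled by $\kappa=1/2$, which preserves inclusion) with Theorem \ref{thm:GHM}, and the hypotheses of the corollary are the union of those of the two theorems. Your handling of the equality condition via the two theorems' equality cases is likewise the intended argument.
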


Recall Cauchy-Schwarz inequality: $|\alpha\gamma+\beta\delta|\le \sqrt{\alpha^2+\beta^2}\sqrt{\gamma^2+\delta^2}$ for any $\alpha,\beta,\gamma,\delta\in \mathbb R$.
Also recall that the equality holds if and only if $\alpha=k\gamma$ and $\beta=k\delta$ for any $k\in \mathbb R$.
Let us see if this inequality holds for fuzzy sets as well.

\begin{thm}[{\bf Cauchy-Schwarz inequality for fuzzy sets}]
\label{thm:CSI}
Let $A,B,C,D\in \mathbb F(X)$ be satisfied with $0\le \mu_A(x)^2+\mu_B(x)^2,\mu_C(x)^2+\mu_D(x)^2\le 1$.
Then, one has
\begin{align}
\label{eq:SineqF}
A\cdot C\oplus B\cdot D\subseteq \sqrt{A^2\oplus B^2}\cdot \sqrt{C^2\oplus D^2}.
\end{align}
The equality holds if and only if $A=\kappa C$ and $B=\kappa D$ and $C=\nu A$ and $D=\nu B$ with $0\le \kappa,\nu\le 1$ satisfying $\kappa\nu=1$.
\end{thm}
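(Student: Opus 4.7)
The plan is to reduce the inclusion to the classical Cauchy--Schwarz inequality pointwise, by unpacking all nested operations on the membership function side. First I would fix $x\in X$ arbitrarily, use the shorthand $a,b,c,d$ for $\mu_A(x),\mu_B(x),\mu_C(x),\mu_D(x)$, and compute each side of (\ref{eq:SineqF}) as a real number in $[0,1]$.

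For the left-hand side, the bounded sum gives
\[
\mu_{A\cdot C\oplus B\cdot D}(x)=\min\{ac+bd,\ 1\}.
\]
Here I would use the assumption $a^{2}+b^{2}\le 1$ and $c^{2}+d^{2}\le 1$ together with classical Cauchy--Schwarz to see that $ac+bd\le\sqrt{a^{2}+b^{2}}\sqrt{c^{2}+d^{2}}\le 1$, so the $\min$ is attained by $ac+bd$. For the right-hand side, I would peel off the outermost algebraic product, then evaluate each $\sqrt{\,\cdot\,}$ as the $p=1/2$-power from the definition of $A^{p}$; the same assumption removes both $\min$'s, producing
\[
\mu_{\sqrt{A^{2}\oplus B^{2}}\cdot\sqrt{C^{2}\oplus D^{2}}}(x)=\sqrt{a^{2}+b^{2}}\cdot\sqrt{c^{2}+d^{2}}.
\]
The inclusion (\ref{eq:SineqF}) then follows immediately from the scalar Cauchy--Schwarz inequality applied to the vectors $(a,b)$ and $(c,d)$.

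For the equality case, I would trace the Cauchy--Schwarz argument backwards: equality in the scalar inequality forces $(a,b)$ and $(c,d)$ to be parallel, i.e.\ there exists $\kappa\ge 0$ with $a=\kappa c$, $b=\kappa d$, and symmetrically $\nu\ge 0$ with $c=\nu a$, $d=\nu b$. Translating proportionality of membership functions into the scalar-multiplication formalism of the paper yields $A=\kappa C$, $B=\kappa D$, $C=\nu A$, $D=\nu B$. Composing these relations gives $\kappa\nu=1$ on the support, and the requirement $0\le\kappa,\nu\le 1$ imposed by the definition of scalar multiplication for fuzzy sets closes the characterisation.

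The main obstacle I expect is not the inequality itself but the bookkeeping for the nested operations on the right: one must be careful that the square root commutes past the bounded sum only because of the hypothesis $a^{2}+b^{2},\,c^{2}+d^{2}\le 1$ (without it one would have $\sqrt{\min\{a^{2}+b^{2},1\}}\ne\sqrt{a^{2}+b^{2}}$ in general, and the chain of identities would break). A secondary subtlety is verifying that the hypothesis is strong enough to also eliminate the outer $\min$ on the left; here the classical Cauchy--Schwarz inequality is what makes the two hypotheses cooperate.
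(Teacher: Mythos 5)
Your proposal is correct and follows essentially the same route as the paper: fix $x$, reduce both membership functions to the scalar quantities $\min\{ac+bd,1\}$ and $\sqrt{\min\{a^2+b^2,1\}}\sqrt{\min\{c^2+d^2,1\}}$, use the hypothesis $a^2+b^2,\,c^2+d^2\le 1$ to strip the $\min$'s, and invoke the classical Cauchy--Schwarz inequality for the vectors $(a,b)$ and $(c,d)$; the equality discussion via parallelism and the constraint $\kappa\nu=1$ matches the paper's as well (and is no less informal about the point that the proportionality constant must not depend on $x$). The only cosmetic difference is that the paper carries the $\min$'s along using monotonicity and the identity $\min\{\sqrt{u},1\}=\sqrt{\min\{u,1\}}$ rather than eliminating them at the outset.
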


\begin{proof}
To see (\ref{eq:SineqF}), we consider membership functions.
We have, from the assumption: $0<a^2+b^2,c^2+d^2\le 1$, 
\begin{align*}
\mu_{A\cdot C\oplus B\cdot D}(x)&=\min\{ac+bd,\ 1\} \\
&\le \min\{\sqrt{a^2+b^2}\sqrt{c^2+d^2},\ 1\} \\
&=\left(\min\{\sqrt{a^2+b^2},\ 1\}\right)\left(\min\{\sqrt{c^2+d^2},\ 1\}\right) \\
&=\sqrt{\min\{a^2+b^2,\ 1\}}\sqrt{\min\{c^2+d^2,\ 1\}} \\
&=\sqrt{\mu_{A^2\oplus B^2}(x)}\sqrt{\mu_{C^2\oplus D^2}(x)} \\
&=\mu_{\sqrt{A^2\oplus B^2}}(x)\mu_{\sqrt{C^2\oplus D^2}}(x) \\
&=\mu_{\sqrt{A^2\oplus B^2}\cdot \sqrt{C^2\oplus D^2}}(x),
\end{align*}
so (\ref{eq:SineqF}) is gained.
Also, it is obvious from the above that `$A=\kappa C$ and $B=\kappa D$' or `$C=\nu A$ and $D=\nu B$' is the condition for the equal sign of (\ref{eq:SineqF}) to hold.
We have to however remark that both of these conditions are essential under $\kappa\nu=1$ (see 2) of Remark \ref{rem:kn=1}), because there is a risk that $C=(1/\kappa)A$ and $D=(1/\kappa)B$ with `$A=\kappa C$ and $B=\kappa D$.'
Here remark that $1/\kappa\ge 1$.
Hence, this completes the proof.
\end{proof}

\begin{rem}
\label{rem:kn=1}
\begin{itemize}
\item[1)] (\ref{eq:SineqF}) is equivalent to 
\begin{align*}
(A\cdot C\oplus B\cdot D)^2\subseteq (A^2\oplus B^2)\cdot (C^2\oplus D^2)
\end{align*}
since $(\min\{ac+bd,1\})^2=\min\{(ac+bd)^2,1\}$.
\item[2)] The reason we assume $\kappa=\nu$ is the following: Since $a=\kappa c$ and $c=\nu a$, we have
\begin{align*}
a=\kappa c=\kappa \nu a,\quad (1-\kappa \nu)a=0,
\end{align*}
so it is necessary that $\kappa \nu=1$.
\end{itemize}
\end{rem}

Bernoulli's inequality is well known as a convenient inequality often used in analysis etc.:
\begin{align*}
(1+x)^m\ge 1+mx\quad (x\ge -1,\ m\in \mathbb N\cup \{0\}).
\end{align*}
We however consider `Generalized Bernoulli's Inequality' to apply this type to fuzzy sets:
\begin{align}
\label{eq:GBineq}
(\alpha+\beta)^m\ge \alpha^m+m\alpha^{m-1}\beta\quad (\alpha,\beta\ge 0,\ m\in \mathbb N).
\end{align}
This is because thinking about $(X\oplus A)^m$ is nonsense as we see in the following theorem.

\begin{thm}[{\bf Generalized Bernoulli's inequality for fuzzy sets}]
Let $m\in \mathbb N$ and $A,B\in \mathbb F(X)$ be satisfied with $0\le \mu_{A}(x)^{m-1}\mu_{B}(x)\le 1/m$.
Then, one has
\begin{align}
\label{eq:GBI}
(A\oplus B)^m\supseteq A^m\oplus m(A^{m-1}\cdot B)
\end{align}
for any $m$.
The equality holds if and only if $A=B=\emptyset$.
\end{thm}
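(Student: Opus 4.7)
The plan is to fix $x \in X$ arbitrarily and reduce (\ref{eq:GBI}) to the scalar inequality (\ref{eq:GBineq}) applied with $\alpha = a = \mu_A(x)$ and $\beta = b = \mu_B(x)$, following the pattern of the preceding theorems. One preliminary observation deserves mention: the hypothesis $0 \le a^{m-1} b \le 1/m$ is precisely what guarantees $m \cdot a^{m-1} b \le 1$, so that the scalar multiple $m(A^{m-1} \cdot B)$ is a legitimate fuzzy set with membership function $m\, a^{m-1} b$.

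Using monotonicity of $t \mapsto t^m$ on $[0,\infty)$, I would first rewrite the left-hand side as
\begin{align*}
\mu_{(A\oplus B)^m}(x) = (\min\{a+b,\ 1\})^m = \min\{(a+b)^m,\ 1\},
\end{align*}
while the right-hand side becomes
\begin{align*}
\mu_{A^m \oplus m(A^{m-1}\cdot B)}(x) = \min\{a^m + m\, a^{m-1} b,\ 1\}.
\end{align*}
The generalized Bernoulli inequality (\ref{eq:GBineq}) then reads $(a+b)^m \ge a^m + m\, a^{m-1} b$, and since the truncation $u \mapsto \min\{u,\ 1\}$ is monotone non-decreasing, the inequality survives the cap and yields $\mu_{(A\oplus B)^m}(x) \ge \mu_{A^m \oplus m(A^{m-1}\cdot B)}(x)$, establishing (\ref{eq:GBI}).

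For the equality clause, I would exploit the identity
\begin{align*}
(a+b)^m - \bigl(a^m + m\, a^{m-1} b\bigr) = \sum_{k=2}^{m} \binom{m}{k} a^{m-k} b^k \ge 0,
\end{align*}
whose vanishing (for $m \ge 2$) forces $a^{m-k} b^k = 0$ for every $2 \le k \le m$; in particular the $k = m$ term gives $b^m = 0$, hence $b = 0$. The main obstacle lies in the ``only if'' direction: to recover the stated global condition $A = B = \emptyset$, I must carefully reconcile this pointwise scalar analysis with the regime where both truncations simultaneously hit the ceiling $1$, and then promote the pointwise equality to a statement holding uniformly over all $x \in X$. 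This bookkeeping around the saturation of $\min\{\cdot,\ 1\}$ is the most delicate step of the argument; the straightforward Bernoulli invocation handles everything else.
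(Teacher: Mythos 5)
Your derivation of the inclusion is exactly the paper's: rewrite both membership functions, pull the $m$-th power inside the truncation via $(\min\{a+b,1\})^m=\min\{(a+b)^m,1\}$, and apply the scalar inequality (\ref{eq:GBineq}) together with the monotonicity of $u\mapsto\min\{u,1\}$; the preliminary remark that the hypothesis gives $ma^{m-1}b\le 1$ also appears verbatim in the paper. On the equality clause you are in fact more careful than the paper, which merely asserts that $A=B=\emptyset$ is ``obvious from the above.'' The saturation obstacle you flag is genuine and cannot be resolved in favour of the stated condition: equality already holds whenever $B=\emptyset$ with $A$ arbitrary, for $m=1$ identically, and whenever both truncations hit the ceiling, e.g.\ $m=2$, $a=0.9$, $b=0.5$ (which satisfies $a^{m-1}b\le 1/m$) gives both sides equal to $1$. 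So your proof of (\ref{eq:GBI}) stands as written, and the unfinished ``only if'' bookkeeping reflects a defect in the theorem's equality claim (and in the paper's own proof) rather than in your approach.
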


\begin{proof}
To see (\ref{eq:GBI}), we consider membership functions.
Fix $x\in X$ arbitrarily.
Remark that $0\le ma^{m-1}b\le 1$ by the assumption.
We have, from (\ref{eq:GBineq}),
\begin{align*}
\mu_{(A\oplus B)^m}(x)&=(\min\{a+b,\ 1\})^m \\
&=\min\{(a+b)^m,\ 1\} \\
&\ge \min\{a^m+ma^{m-1}b,\ 1\} \\
&=\mu_{A^m\oplus m(A^{m-1}\cdot B)}(x).
\end{align*}
Also, it is obvious from the above that `$A=B=\emptyset$' is the condition for the equal sign of (\ref{eq:GBI}) to hold.
Hence, this completes the proof.
\end{proof}

Chebyshev's inequality is also known:
\begin{align}
\label{eq:Chebyineq}
(\alpha+\beta)(\gamma+\delta)\le 2(\alpha\gamma+\beta\delta)
\end{align}
for any $\alpha,\beta,\gamma,\delta\in \mathbb R$ satisfying $\alpha\ge \beta$ and $\gamma\ge \delta$.
We show the following lemma so as to apply this to fuzzy set inequalities (by slightly changing the shape).

\begin{lem}
\label{lem:ma1mb1}
Let $\alpha,\beta\in \mathbb R$ be satisfied with $\alpha,\beta\ge 0$.
Then, one has
\begin{align*}
\min\{\alpha,1\}\min\{\beta,1\}\le \min\{\alpha\beta,1\}.
\end{align*}
\end{lem}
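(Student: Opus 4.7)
The plan is to avoid a four-way case analysis and instead exploit the two trivial bounds that $\min\{\alpha,1\}$ satisfies simultaneously, namely $\min\{\alpha,1\}\le\alpha$ and $\min\{\alpha,1\}\le 1$. Multiplying these pairwise with the analogous bounds for $\beta$, and using that all quantities are non-negative so multiplication preserves inequalities, I get at once
\[
\min\{\alpha,1\}\min\{\beta,1\}\le \alpha\beta \quad\text{and}\quad \min\{\alpha,1\}\min\{\beta,1\}\le 1\cdot 1=1.
\]
Then, since a number that is $\le\alpha\beta$ and $\le 1$ is automatically $\le\min\{\alpha\beta,1\}$, the desired inequality follows immediately.

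If a more pedestrian presentation is preferred, the same conclusion can be reached by exhausting the four cases according to whether each of $\alpha,\beta$ exceeds $1$: in the case $\alpha,\beta\le 1$ the product $\alpha\beta\le 1$ and both sides equal $\alpha\beta$; in the mixed cases $\alpha\le 1<\beta$ (and symmetrically) the left side is $\alpha\le\min\{\alpha\beta,1\}$ because $\alpha\beta\ge \alpha$; in the case $\alpha,\beta>1$ both sides equal $1$. Either route closes the proof.

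I don't anticipate a genuine obstacle here: the statement is elementary, and the non-negativity assumption $\alpha,\beta\ge 0$ is precisely what is needed to multiply the two bounds $\min\{\alpha,1\}\le \alpha$ and $\min\{\beta,1\}\le \beta$ without flipping the inequality. I would therefore go with the first, two-line argument.
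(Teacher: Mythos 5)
Your first argument is correct and takes a genuinely different route from the paper. The paper proves the lemma by a case analysis: assuming $\alpha\le\beta$ without loss of generality, it splits into the three cases $\alpha\le\beta\le 1$, $\alpha\le 1\le\beta$, and $1\le\alpha\le\beta$, checking the inequality (in fact equality in the first and third cases) separately in each. You instead use the universal property of the minimum: since $0\le\min\{\alpha,1\}\le\alpha$ and $0\le\min\{\beta,1\}\le\beta$, the product is at most $\alpha\beta$; since each factor is also at most $1$, the product is at most $1$; and anything bounded by both $\alpha\beta$ and $1$ is bounded by $\min\{\alpha\beta,1\}$. This is shorter, needs no symmetry reduction, and isolates exactly where the hypothesis $\alpha,\beta\ge 0$ enters (multiplying inequalities between non-negative quantities). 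What the paper's case analysis buys in exchange is explicit information about when each side equals the other (equality in cases i) and iii), possible strict inequality only in the mixed case), which your two-line argument does not exhibit; since the lemma as stated does not assert an equality condition, nothing is lost. Your fallback four-case argument is essentially the paper's proof, just without the WLOG reduction.
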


\begin{proof}
We can set $\alpha\le \beta$ without loss of generality.
\begin{itemize}
\item[i)] If $0\le \alpha\le \beta\le 1$:

${\rm LHS}=\alpha\beta$ and ${\rm RHS}=\alpha\beta$ since $0\le \alpha\beta\le 1$, so ${\rm LHS}={\rm RHS}$.
\item[ii)] If $0\le \alpha\le 1\le \beta$:

Since $\alpha\beta-\alpha=\alpha(\beta-1)\ge 0$, ${\rm LHS}=\min\{\alpha,1\}\le \min\{\alpha\beta,1\}={\rm RHS}$.
\item[iii)] If $1\le \alpha\le \beta$:

${\rm LHS}=1$ and ${\rm RHS}=1$ since $\alpha\beta\ge 1$, so ${\rm LHS}={\rm RHS}$.
\end{itemize}
Hence, we have found that the desired equation holds in any case.
\end{proof}

\begin{thm}[{\bf Pseudo Chebyshev's inequality for fuzzy sets}]
\label{thm:PCheby}
Let $A,B,C,D\in \mathbb F(X)$ be satisfied with $A\supseteq B$, $C\supseteq D$ and $0\le \mu_A(x)+\mu_B(x),\mu_C(x)+\mu_D(x)\le 1$.
Then, one has
\begin{align}
\label{eq:Cheby}
\frac{(A\oplus B)\cdot (C\oplus D)}{2}\subsetneq (A\cdot C)\oplus (B\cdot D).
\end{align}
\end{thm}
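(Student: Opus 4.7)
My plan is to reduce the fuzzy set inclusion to a pointwise numerical inequality and then invoke Chebyshev (\ref{eq:Chebyineq}). Fix $x\in X$ and write $a=\mu_A(x)$, $b=\mu_B(x)$, $c=\mu_C(x)$, $d=\mu_D(x)$. The hypotheses $A\supseteq B$ and $C\supseteq D$ translate to $a\ge b$ and $c\ge d$, which is precisely the ordering assumption demanded by (\ref{eq:Chebyineq}).

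First I would simplify both sides using the hypotheses $0\le a+b\le 1$ and $0\le c+d\le 1$: these force $\mu_{A\oplus B}(x)=a+b$ and $\mu_{C\oplus D}(x)=c+d$, so the left-hand side of (\ref{eq:Cheby}) has membership value $\tfrac{1}{2}(a+b)(c+d)$, while the right-hand side has membership value $\min\{ac+bd,\,1\}$. Note that no truncation in the scalar multiplication by $1/2$ is needed, since $(a+b)(c+d)\le 1$ already forces the product to lie in $[0,1]$.

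Then applying (\ref{eq:Chebyineq}) directly gives $(a+b)(c+d)\le 2(ac+bd)$, hence $\tfrac{1}{2}(a+b)(c+d)\le ac+bd$. Simultaneously, $\tfrac{1}{2}(a+b)(c+d)\le \tfrac{1}{2}\cdot 1\cdot 1=\tfrac{1}{2}\le 1$. Combining these two bounds yields
\[
\tfrac{1}{2}(a+b)(c+d)\le \min\{ac+bd,\,1\},
\]
which is the pointwise inequality establishing the inclusion $\subseteq$.

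The main subtlety I anticipate is justifying the \emph{proper} inclusion $\subsetneq$ rather than mere $\subseteq$. The identity $2(ac+bd)-(a+b)(c+d)=(a-b)(c-d)$ shows that equality in Chebyshev holds iff $a=b$ or $c=d$, so the pointwise inequality is strict at any $x$ for which $\mu_A(x)\ne\mu_B(x)$ and $\mu_C(x)\ne\mu_D(x)$. I would argue that under the generic hypothesis pattern $A\supseteq B$, $C\supseteq D$ one can always produce such an $x$ (otherwise either $A=B$ or $C=D$ throughout $X$, which is the degenerate case), so in the non-degenerate regime the containment is proper. This is the step I expect to require the most care, since the author's statement $\subsetneq$ is worded without explicit equality conditions.
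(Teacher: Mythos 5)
Your proof of the inclusion $\subseteq$ in (\ref{eq:Cheby}) is correct and follows essentially the paper's route: reduce to membership values at a fixed $x$ and apply Chebyshev's inequality (\ref{eq:Chebyineq}) under $a\ge b$, $c\ge d$. Your version is in fact a little cleaner: you use the hypothesis $a+b,\,c+d\le 1$ to discard the minima on the left at the outset, whereas the paper carries $\min\{a+b,1\}\min\{c+d,1\}$ along and invokes Lemma \ref{lem:ma1mb1} before applying (\ref{eq:Chebyineq}). One further simplification you could have made: under the hypotheses one always has $ac+bd\le a+b\le 1$, so the right-hand membership is just $ac+bd$ and the $\min$ there is redundant as well.

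The genuine gap is the properness claim $\subsetneq$, and it cannot be repaired as the theorem is stated, so your hesitation is well founded. Take $A=B=C=D$ with constant membership $1/2$: all hypotheses hold, and both sides of (\ref{eq:Cheby}) have membership $\frac{1}{2}\cdot 1\cdot 1=\frac{1}{2}=\min\{\frac{1}{4}+\frac{1}{4},1\}$ at every point, so the two fuzzy sets are equal. (The paper's own proof slips exactly here: its strict step $\frac{1}{2}\min\{2(ac+bd),1\}<\frac{1}{2}\min\{2(ac+bd),2\}$ fails whenever $2(ac+bd)\le 1$.) Moreover, your proposed fix via the non-degeneracy ``$A\ne B$ and $C\ne D$'' is still insufficient: those conditions give witnesses $x_1$ with $\mu_A(x_1)>\mu_B(x_1)$ and $x_2$ with $\mu_C(x_2)>\mu_D(x_2)$ which may be distinct, and if $\mu_A=\mu_B$ wherever $\mu_C\ne\mu_D$ and vice versa, equality again holds at every point. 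What you actually need is a single $x$ with $\mu_A(x)>\mu_B(x)$ and $\mu_C(x)>\mu_D(x)$; at such an $x$ the identity $2(ac+bd)-(a+b)(c+d)=(a-b)(c-d)>0$ together with $ac+bd\le 1$ gives strict inequality of the membership values, and combined with your pointwise $\subseteq$ this yields $\subsetneq$ in the sense of inclusion without equality. So your argument proves exactly as much as is true ($\subseteq$, with an explicit extra hypothesis needed for properness); the deficiency lies in the statement and in the paper's strictness step, not in your reduction.
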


\begin{proof}
To see (\ref{eq:Cheby}), we consider membership functions.
Fix $x\in X$ arbitrarily.
Since $a\ge b,c\ge d$ and $0\le a+b,c+d\le 1$ from the assumption, (\ref{eq:Chebyineq}) and Lemma \ref{lem:ma1mb1} imply that
\begin{align*}
\mu_{(A\oplus B)\cdot (C\oplus D)/2}(x)&=\frac{1}{2}\min\{a+b,\ 1\}\min\{c+d,\ 1\} \\
&\le \frac{1}{2}\min\{(a+b)(c+d),\ 1\} \\
&\le \frac{1}{2}\min\{2(ac+bd),\ 1\} \\
&<\frac{1}{2}\min\{2(ac+bd),\ 2\} \\
&=\min\{ac+bd,\ 1\} \\
&=\mu_{(A\cdot C)\oplus (B\cdot D)}(x).
\end{align*}
Hence, this completes the proof.
\end{proof}

\begin{rem}
Theorem \ref{thm:PCheby} is formulated as a fuzzy version of the rewritten (\ref{eq:Chebyineq}):
\begin{align*}
\frac{(\alpha+\beta)(\gamma+\delta)}{2}\le \alpha\gamma+\beta\delta.
\end{align*}
This is because considering the fuzzy set inequality of (\ref{eq:Chebyineq})-type comes a risk that $\mu_{2(A\cdot C+B\cdot D)}(x)\ge 1$.
\end{rem}

We recall the following simple but important inequalities.
It may not be necessary, but we will give the proofs.

\begin{lem}
\label{lem:2pmap+bp}
Let $\alpha,\beta,p\ge 0$.
Then, one has
\begin{align}
\label{eq:2pmapbp}
(\alpha+\beta)^p\le 2^p\max\{\alpha^p, \beta^p\}.
\end{align}
The equality holds if and only if $\alpha=\beta$.
In particular, one has
\begin{align}
\label{eq:2pap+bp}
(\alpha+\beta)^p\le 2^p(\alpha^p+\beta^p).
\end{align}
The equality holds if and only if $\alpha=\beta=0$.
\end{lem}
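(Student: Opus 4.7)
The plan is to prove (\ref{eq:2pmapbp}) first by a WLOG reduction together with monotonicity of $t \mapsto t^p$ on $[0,\infty)$, and then derive the coarser (\ref{eq:2pap+bp}) as an immediate corollary by bounding $\max\{\alpha^p, \beta^p\}$ above by $\alpha^p + \beta^p$.

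For (\ref{eq:2pmapbp}), I would assume without loss of generality that $\alpha \ge \beta \ge 0$, so that $\max\{\alpha^p, \beta^p\} = \alpha^p$. The trivial estimate $\alpha + \beta \le 2\alpha$, combined with the fact that $t \mapsto t^p$ is non-decreasing on $[0,\infty)$ for $p \ge 0$, then yields $(\alpha+\beta)^p \le (2\alpha)^p = 2^p \alpha^p$. The equality case collapses to $\alpha + \beta = 2\alpha$, that is $\alpha = \beta$. For the in-particular statement (\ref{eq:2pap+bp}), chain (\ref{eq:2pmapbp}) with the elementary bound $\max\{u,v\} \le u+v$ for $u,v \ge 0$, applied to $u = \alpha^p$ and $v = \beta^p$. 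Equality in the chained inequality forces $\alpha = \beta$ (from the first link) and $\min\{\alpha^p, \beta^p\} = 0$ (from the second link), so altogether $\alpha = \beta = 0$.

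There is essentially no obstacle here; the only minor nuisance is the degenerate exponent $p = 0$, where $t \mapsto t^p$ is constant rather than strictly monotone. In that regime the stated equality characterisations need the convention $0^0 := 1$ (or an implicit restriction to $p > 0$) to read correctly, but the inequalities themselves remain valid in any case.
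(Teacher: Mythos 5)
Your proof is correct and follows essentially the same route as the paper: bound $\alpha+\beta$ by $2\max\{\alpha,\beta\}$ (your WLOG $\alpha\ge\beta$ is just a rephrasing), raise to the $p$-th power using monotonicity, and then pass from $\max\{\alpha^p,\beta^p\}$ to $\alpha^p+\beta^p$ for the second inequality, with the same equality analysis. Your remark about the degenerate exponent $p=0$ is a fair caveat that the paper silently ignores, but it does not change the substance of the argument.
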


\begin{proof}
$(\alpha+\beta)^p\le (2\max\{\alpha,\beta\})^p=2^p\max\{\alpha^p,\beta^p\}$.
It is obvious from the above that `$\alpha=\beta$' is the condition for the equal sign of (\ref{eq:2pmapbp}) to hold.
If you proceed with the estimate further, we have $(\alpha+\beta)^p\le 2^p\max\{\alpha^p,\beta^p\}\le 2^p(\alpha^p+\beta^p)$.
It is also obvious from the above that `$\alpha=\beta=0$' is the condition for the equal sign of (\ref{eq:2pap+bp}) to hold.
Hence, this completes the proof.
\end{proof}

(\ref{eq:2pap+bp}) is often used in analysis.
For instance, it can be shown that the $p$-Lebesgue space $L^p$, $1\le p\le +\infty$, is a vector space by using (\ref{eq:2pap+bp}):
If $f,g\in L^p$, then
\begin{align*}
\int|f+g|^p=2^p\left(\int|f|^p+\int|g|^p\right)<+\infty.
\end{align*}
Thus $f+g\in L^p$ (see \cite{F} and so on for details).

Let us obtain a fuzzy set version of (\ref{eq:2pap+bp}) by slightly changing the shape as follows.

\begin{thm}
\label{thm:AoBp2pApcBp}
Let $A,B\in \mathbb F(X)$.
Then, one has
\begin{align}
\label{eq:AoBp2pApcBp}
\frac{(A\oplus B)^p}{2^p}\subseteq A^p\cup B^p
\end{align}
for any $0\le p<1$.
The equality holds if and only if $A=B$ and $p=0$.
\end{thm}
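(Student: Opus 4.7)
The plan is to reduce the inclusion (\ref{eq:AoBp2pApcBp}) at a single fixed $x \in X$ to the scalar inequality (\ref{eq:2pmapbp}) of Lemma \ref{lem:2pmap+bp}, so that the arithmetic heavy lifting is already done.

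First I would fix $x \in X$ and abbreviate $a := \mu_A(x)$, $b := \mu_B(x)$. Unfolding the definitions of the bounded sum, the $p$-th power, the scalar multiplication by $1/2^p$ (an admissible scalar, since $p \ge 0$ gives $1/2^p \in (0, 1]$), and the fuzzy union, I obtain
\begin{align*}
\mu_{(A \oplus B)^p / 2^p}(x) &= \frac{(\min\{a + b,\, 1\})^p}{2^p}, \\
\mu_{A^p \cup B^p}(x) &= \max\{a^p,\, b^p\}.
\end{align*}
Thus (\ref{eq:AoBp2pApcBp}) is equivalent to the pointwise estimate $(\min\{a+b,1\})^p \le 2^p \max\{a^p, b^p\}$ for all $x \in X$.

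Second, I would discard the inner $\min$ by the trivial bound $\min\{a+b, 1\} \le a + b$ and monotonicity of $t \mapsto t^p$ on $[0, \infty)$ (valid because $p \ge 0$), and then apply (\ref{eq:2pmapbp}) to the resulting right-hand side. This yields the chain
\begin{align*}
(\min\{a+b,\, 1\})^p \le (a+b)^p \le 2^p \max\{a^p,\, b^p\},
\end{align*}
and dividing through by $2^p$ completes the inclusion.

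For the equality clause I would trace back through the two steps just used: equality in the first inequality requires $a + b \le 1$, and equality in Lemma \ref{lem:2pmap+bp} demands $a = b$. The clean sufficient regime ``$A = B$ and $p = 0$'' announced by the theorem is immediate, since both sides collapse to the universal set $X$ independently of the pointwise values. I do not anticipate any genuine obstacle; the argument is a routine unpacking of definitions once Lemma \ref{lem:2pmap+bp} is in hand, with the only minor check being that $1/2^p$ is a legitimate scalar, which holds because $2^p \ge 1$ for $p \ge 0$.
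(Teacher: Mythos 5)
Your proposal is correct and follows essentially the same route as the paper: both reduce the inclusion pointwise to the scalar estimate (\ref{eq:2pmapbp}) of Lemma \ref{lem:2pmap+bp} applied to $a=\mu_A(x)$, $b=\mu_B(x)$, the only cosmetic difference being that you discard the $\min\{\cdot,1\}$ via $\min\{a+b,1\}\le a+b$ before applying the lemma, whereas the paper keeps the $\min$ and relaxes its second argument from $1$ to $2^p$. Your tracing of the equality case is no less careful than the paper's (which simply calls it obvious), so there is no gap to report.
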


\begin{proof}
To see (\ref{eq:AoBp2pApcBp}), we consider membership functions.
Fix $x\in X$ arbitrarily.
We have, from (\ref{eq:2pmapbp}),
\begin{align*}
\mu_{(A\oplus B)^p/2^p}(x)&=\frac{1}{2^p}(\min\{a+b,\ 1\})^p \\
&=\frac{1}{2^p}\min\{(a+b)^p,\ 1\} \\
&\le \frac{1}{2^p}\min\{2^p\max\{a^p,b^p\},\ 1\} \\
&\le \frac{1}{2^p}\min\{2^p\max\{a^p,b^p\},\ 2^p\} \\
&=\min\{\max\{a^p,b^p\},\ 1\} \\
&=\max\{a^p,b^p\} \\
&=\mu_{A^p\cup B^p}(x).
\end{align*}
Also, it is obvious from the above and Lemma \ref{lem:2pmap+bp} that `$A=B$ and $p=0$' is the condition for the equal sign of (\ref{eq:AoBp2pApcBp}) to hold.
Hence, this completes the proof.
\end{proof}

\begin{rem}
Theorem \ref{thm:AoBp2pApcBp} is formulated as a fuzzy version of the rewritten (\ref{eq:2pmapbp}):
\begin{align*}
\frac{(\alpha+\beta)^p}{2^p}\le \max\{\alpha^p,\beta^p\}.
\end{align*}
This is because considering the fuzzy set inequality of (\ref{eq:2pmapbp})-type comes a risk that $\mu_{2^p(A^p\cup B^p)}(x)\ge 1$.
\end{rem}

\section{Conclusion}
Inclusion relations for fuzzy sets correspond to inequalities for the membership functions.
That is, we will be able to call inclusion relations for fuzzy sets Fuzzy Set Inequalities.
We found that it is rare that conventional inequalities hold true for fuzzy sets, without any assumptions, in the sense of inclusion (e.g. `$0\le \mu_{A\cdot B}(x)\le 1/4$' in Theorem \ref{thm:AGM}, `$0<\mu_A(x)+\mu_B(x)\le 1$' in Theorem \ref{thm:GHM}, `$0\le \mu_A(x)^2+\mu_B(x)^2,\mu_C(x)^2+\mu_D(x)^2\le 1$' in Theorem \ref{thm:CSI} and so on).
Moreover, the assumptions can be strong conditions.
However, that may be improved by replacing them with other operations.
We would like to make that investigation a future topic.

Readers interested in other operations and basic fuzzy set inequalities should refer to e.g. \cite{MT,Mp}.
There are so many formulae on various operations in \cite{MT,Mp}.
Furthermore, readers who want to know various inequalities, from basic to maniac, are recommended to refer to the website \cite{J}.

Fuzzy theory was originally born as a field of applied mathematics, but the study of `pure fuzzy mathematics' has also been actively conducted.
The results can be seen in e.g. the recently published \cite{SG}.

By the way, we derived some fundamental fuzzy set inequalities in the present note, but we did not touch on those applications.
We hope that the applications of fuzzy set inequalities obtained in the present note will be found in other fields.

{\small
\noindent
Author: {\sc Norihiro Someyama}

He received a M.Sc. degree from Gakushuin University in 2014 and completed the Ph.D program without a Ph.D. degree the same university in 2017.
He is a head priest of Shin-yo-ji Buddhist Temple in Japan.
His research interests are the spectral theory of Schr\"{o}dinger operators and the theory of fuzzy Schr\"{o}dinger equations.
He received the Member Encouragement Award of Biomedical Fuzzy System Association for his lecture entitled `{\it Characteristic Analysis of Fuzzy Graph and its Application IV}' in November 2018 and the Excellent Presentation Award of National Congress of Theoretical and Applied Mechanics / JSCE Applied Mechanics Symposium for his lecture entitled `{\it Number of Eigenvalues of Non-self-adjoint Schr\"{o}dinger Operators with Dilation Analytic Complex Potentials}' in October 2019.
}

\end{document}